\theoremstyle{plain}
\newtheorem{theorem}{Theorem}[section]
\newtheorem{lemma}[theorem]{Lemma}
\newtheorem{corollary}[theorem]{Corollary}
\theoremstyle{definition}
\newtheorem{definition}[theorem]{Definition}
\newtheorem{discussion}[theorem]{Discussion}
\newtheorem{example}[theorem]{Example}
\newtheorem{question}[theorem]{Question}
\newcommand{\N}{{\mathcal{N}}}                  %Stanley-Reisner operator
\newcommand{\GA}{\Gamma^{\vee}}                 %\Gamma dual
\newcommand{\lk}{{\rm{lk}}\ }                   %link
\newcommand{\rhk}[2]{\tilde{H}_{#1}(#2,k)}   %reduced homology over k
\newcommand{\xs}{x_1,\ldots,x_n}                %x_1,...,x_n
\newcommand{\m}{\mathbf{m}}                 %maximal ideal
\newcommand{\be}{\beta}                          %independence number
\newcommand{\lcm}{{\mathop{\rm{lcm}}}}          %least common multiple
\newcommand{\st}{\ : \ }                        % such that: | 
\newcommand{\tuple}[1]{\langle #1 \rangle}      % < ... >
\newcommand{\void}[1]{}
\newcommand{\erase}[1]{}
\newcommand{\LCM}{\mbox{LCM}}
\newcommand{\facets}{\mbox{Facets}} 
\newcommand{\NN}{\mathbb{N}} 
\newcommand{\PP}{\mathcal{P}}
\newcommand{\sm}{\setminus}
\renewcommand{\leq}{\leqslant}          
\renewcommand{\geq}{\geqslant}  
\date{\today}
\author{Sara Faridi\thanks{Department of
Mathematics and Statistics, Dalhousie University, Halifax, Canada, 
faridi@dal.ca} \hspace{1in}
Mayada Shahada \thanks{Department of
Mathematics, University of Bahrain, Sakheer, Kingdom of Bahrain, 
mshahada@uob.edu.bh}}
\title{\Large \sc Breaking up Simplicial Homology and 
  Subadditivity of Syzygies\thanks{Subject Classification 13A15, 13D02, 13F55, 05E40, 05E45} \thanks{Keywords: monomial ideals, subadditivity, simplicial homology}}
\begin{document}

\maketitle

\begin{abstract} We consider the following question: if a simplicial
  complex $\Gamma$ has $d$-homology, then
  does the corresponding $d$-cycle always induce cycles of smaller
  dimension that are not boundaries? We
    provide an answer to this question in a fixed
    dimension. We use the breaking of homology to show the
    subadditivity property for the maximal degrees of syzygies of
    monomial ideals in a fixed homological degree.
 \end{abstract}

%%%%%%%%%%%%%%%%%%%%%%%%%%%%%%%%%%%%%%%%%%%%%%%%%%%%%%%%%%%%%
%%%%%%%%%%%%%%%%%%%%%%%%%%%%%%%%%%%%%%%%%%%%%%%%%%%%%%%%%%%%
\section{Introduction} 

The motivation for this paper is the subadditivity property for the
maximal degrees of syzygies of monomial ideals in polynomial rings.
Let $I$ be a homogeneous ideal in the polynomials ring
$S=k[x_1,\ldots,x_n]$ over a field $k$. Let $t_a$ denoted the
maximum value of $j$ such that the graded Betti number
$\beta_{a,j}(S/I) \neq 0$. The ideal $I$ satisfies the subadditivity
property on the maximal degrees of its syzygies if
\begin{equation}\label{e:subadditivity}
	t_{a+b} \leq t_a + t_b
\end{equation}
where $a+b$ is not more than the projective dimension of
the ideal.

The inequality in \eqref{e:subadditivity} arises most naturally in
the context of (Castelnuovo-Mumford) regularity, which, for 
the ideal $I$, can be described as the maximum value of $t_a-a$, for
all positive integers $a$. It has been shown to fail in general by
Avramov, Conca and Iyengar~\cite{ACI}, even if one restricts to
Cohen-Macaulay or even Gorenstein settings (see~\cite{JM2} for
examples and for a general survey on the topic). However, many
special cases are known: certain algebras with codimension $\leq 1$
(Eisenbud, Huneke and Ulrich~\cite{EHU}), certain classes of Koszul
rings (Avramov, Conca and Iyengar~\cite{ACI}), certain homological
degrees for Gorenstein algebras (El~Khoury and
Srinivasan~\cite{ES}), among others.

Avramov, Conca and Iyengar~\cite{ACI} conjectured that the subadditivity
property holds for Kozul rings and for all monomial ideals (it is
also open for toric ideals~\cite{JM2}). In the case of monomial
ideals, there are special cases for which~\eqref{e:subadditivity}
has been verified: when $a=1$ (Herzog and Srinivasan~\cite{HS}),
when $a=1,2,3$ and $I$ is generated in degree $2$ (Fern\'andez-Ramos
and Gimenez~\cite{FG}, Abedelfatah and Nevo~\cite{AN}),
Cohen-Macaulay ideals generated by monomials of degree $2$ when the
base field has characteristic $0$~\cite{ACI}, facet ideals of
simplicial forests (Faridi~\cite{F1}), ideals whose Betti diagram
has a special ``shape'' (Bigdeli and Herzog~\cite{BH}), several
classes of edge ideals of graphs and path ideals of rooted
trees~(Jayanthan and Kumar~\cite{JK}), and for $a$ where the
Stanley-Reisner complex of $I$ has dimension bounded by $t_a-a$
(Abedelfatah~\cite{A}).

In the case of monomial ideals, the syzygies can be characterized as
dimensions of homology modules of topological objects. This is one
of the central themes of Stanley-Reisner Theory, connecting
Commutative Algebra to Discrete Geometry and Topology. We refer the
reader to the books~\cite{BrH,P} for more details on these rich
connections.

By viewing the subadditivity property as a geometric one, the
inequality in~\eqref{e:subadditivity} can be shown to follow from the
following general type of question: \begin{center} \emph{Does a
		topological object with $d$-homology break into 
		sub-objects that have $a$-homology and $b$-homology, where $a$ and $b$ are related to $d$?}
\end{center}

This approach was taken by the first author in~\cite{F1}, where the
topological objects were atomic lattices ($\lcm$ lattices of monomial
ideals); see \cref{q:main} and \cref{q:main-general} below. In this
paper, using Hochster's formula (\cref{e:Hochster}), we examine this
problem from the point of view of the Stanley-Reisner complex, and we can provide a positive answer to the general
  question above for a fixed value of $d$. As a result we show that
subadditivity holds in a fixed homological degree for all monomial
ideals. The last section interprets the square-free results of the
paper for general monomial ideals.

\subsubsection*{Acknowledgements} The authors are grateful to the
referees and Kieran Bhaskara, whose comments improved the paper. The second author would
like to acknowledge the support of Dalhousie University where she was
a postdoc while this research was carried out. The research of the
first author was supported by an NSERC Discovery
Grant.

%************************************************************
%************************************************************
\section{Setup}

\subsection{The subadditivity property}

Throughout the paper, let $S=k[x_1,\ldots,x_n]$ be a polynomial ring
over a field $k$.  If $I$ is a graded ideal of $S$ with minimal free
resolution
$$ 0 \to \oplus_{j \in \NN} S(-j)^{\beta_{p,j}}\to \oplus_{j \in \NN}
S(-j)^{\beta_{p-1,j}} \to \cdots \to \oplus_{j \in \NN}
S(-j)^{\beta_{1,j}} \to S,$$ then for each $i$ and $j$, the rank
$\beta_{i,j}(S/I)$ of the free $S$-modules appearing above are called
the {\bf graded Betti numbers} of the $S$-module $S/I$.

If we let $$t_a=\max \{j \st \beta_{a,j}(S/I) \neq 0\},$$ a question
is whether the $t_a$ satisfy the {\bf subadditivity property}:
$t_{a+b} \leq t_a + t_b$?

The answer is known to be negative for a general homogeneous
ideal~\cite{ACI}, and unknown in the case of monomial ideals. For the
case of monomial ideals, there are special cases that are
known~\cite{HS, AN, FG, F1, BH, A}.

In the case of monomial ideals, Betti numbers can be interpreted as
the homology of objects in discrete topology: simplicial complexes,
order complexes of lattices, etc.; see for example~\cite{P} for a
survey of this approach. As a result, the subadditivity question can
be viewed as a question of breaking up homology in these objects. This
idea was explored in~\cite{F1} by the first author, where the
subadditivity problem was solved for facet ideals of simplicial
forests using homology of lattices.

By a method called \emph{polarization}~\cite{Fr} (see
  \cref{s:polarization} for the definition), one can reduce questions regarding
Betti numbers of monomial ideals to the class of {\bf square-free}
monomial ideals.

If $u \subset [n]=\{ 1,\ldots,n\}$, then  we define $$\m_u= \Pi_{i \in u} x_i$$
to be the  {\bf  square-free monomial with  support $u$}.

For our purposes it is useful to
consider a finer grading of the Betti numbers by indexing the Betti
numbers with monomials of the polynomial ring $S$.  A
\emph{multigraded} Betti number of $S/I$ is of the form
$\beta_{i,\m}(S/I)$ where $\m$ is a monomial in $S$
and $$\beta_{i,j}(S/I)= \sum_{u \subseteq [n] \mbox{ and }|u|=j}
\beta_{i,\m_u}(S/I).$$

\subsection{Simplicial Complexes}
A {\bf simplicial complex} $\Gamma$ on a set $W$ is a set of subsets
of $W$ with the property that if $F\in \Gamma$ then for every subset
$G \subseteq F$ we have $G \in \Gamma$. Every element of $\Gamma$
is called a {\bf face}, the maximal faces under inclusion are called
{\bf facets}, and a simplicial complex contained in $\Gamma$ is called
a {\bf subcomplex} of $\Gamma$. The set of all $v \in W$ such that $\{v\}
\in \Gamma$ is called the {\bf vertex set} of $\Gamma$, and is denoted
by $V(\Gamma)$. The set of facets of $\Gamma$ is denoted by
$\facets(\Gamma)$. If $\facets(\Gamma)=\{F_1,\ldots,F_q\}$, then we
denote $\Gamma$ by $$\Gamma=\tuple{F_1,\ldots,F_q}.$$ If $A \subset
V(\Gamma)$, then the {\bf induced subcomplex} $\Gamma_A$ is defined
as $$\Gamma_A=\{F \in \Gamma \st F \subseteq A\}.$$ 
The {\bf Alexander dual} $\GA$ of $\Gamma$, if we set $F^c=V(\Gamma)
\sm F$, is defined as $$\GA
=\{F \subseteq V (\Gamma) \st F^c \notin \Gamma\}
=\{V (\Gamma) \sm F   \st F \notin \Gamma\}.
$$  The {\bf link} of a face $F$ of $\Gamma$
is $$\lk_\Gamma(F)=\{G \in \Gamma \st F \cap G = \emptyset \mbox{ and
} F \cup G \in \Gamma \}.$$

If $I$ is a square-free monomial ideal in $S$, it corresponds uniquely to a
simplicial complex $$\N(I)=\{u \subset [n] \st m_u \notin I\}$$
called the {\bf Stanley-Reisner complex} of $I$.  Conversely, if $\Gamma$
is a simplicial complex whose vertices are labelled with $\xs$, then
one can associate to it its unique {\bf Stanley-Reisner
  ideal} $$\N(\Gamma)=\{m_u \st u \subset [n] \mbox{ and } u \notin
\Gamma\}.$$

The uniqueness of the Stanley-Reisner correspondence implies that
$$\N(\Gamma)=I \iff \N(I)=\Gamma.$$

\subsection{The lcm lattice}

A \emph{lattice} is a partially ordered set where
every two elements have a greatest lower bound called their
\emph{meet} and a lowest upper bound called their \emph{join}. A
bounded lattice has an upper and a lower bound denoted by $\hat{1}$
and $\hat{0}$, respectively.

If $L$ is a lattice with $r$ elements, then the {\bf
    order complex} of $L$ is the simplicial complex on $r$
  vertices, where the elements of each  chain in $L$ form a face.

If $I$ is a monomial ideal, then the {\bf lcm lattice} of $I$, denoted
by $\LCM(I)$, is a bounded lattice ordered by
  divisibility, whose elements are the generators of
  $I$ and their least common multiples, and the meet of two elements
is their least common multiple.

Two elements of a lattice are called {\bf complements} if their join
is $\hat{1}$ and their meet is $\hat{0}$. If the lattice is $\LCM(I)$,
then it was shown in~\cite{F1} that two monomials
in $\LCM(I)$ are complements if their gcd is not in $I$ and their lcm
is the lcm of all the generators of $I$.

Gasharov, Peeva and Welker~\cite{GPW,P} showed that multigraded Betti numbers of
$S/I$ can be calculated from the homology of (the order complex of)
 the lattice $\LCM(I)$: if $\m$ is a monomial in $L=\LCM(I)$, then 
$$ \beta_{i,\m}(S/I)=\dim_k \tilde{H}_{i-2} \left ( (1, \m)_{L}; k \right ) $$
where $(1, \m)_{L}$ refers to the subcomplex of the
  order complex consisting of all nontrivial monomials in $L$
strictly dividing $\m$.

On the other hand, in a 1977 paper, Baclawski~\cite{B} showed that if $L$ is a
finite lattice whose proper part has nonzero homology, then every
element of $L$ has a complement.

The following question was raised in~\cite{F1} as a
  potential way to answer the subadditivity question.

\begin{question}\label{q:main} If $I$ is a square-free monomial ideal
  in variables $\xs$, and $\beta_{i,n} (S/I)\neq 0$, $a,b >0$ and
  $i=a+b$, are there complements $\m$ and $\m'$ in $\LCM(I)$ with
  $\beta_{a,\m}(S/I) \neq 0$ and $\beta_{b,\m'}(S/I) \neq 0$?
\end{question}

Considering  that it is enough to study the ``top degree''
 Betti numbers (those of degree $n$, in this case)~\cite{EF1,F1},
a positive answer to \cref{q:main} will establish the
subadditivity property for all monomial ideals, since
$$t_a+t_b \geq \deg(\m)+ \deg(\m') \geq n=t_i.$$

\cref{q:main} can be written more generally as a
  question about the homology of the $\lcm$ lattice, or
  in fact, any finite lattice.

  \begin{question}\label{q:main-general} If $L=\LCM(I)$ and 
$\tilde{H}_{i-2} \left ( (1,
      x_1 \cdots x_n)_{L}; k \right ) \neq 0$, $a,b>0$ and $i=a+b$,
    are there complements $\m$ and $\m'$ in $\LCM(I)$ with
    $\tilde{H}_{a-2} \left ( (1,\m)_{L}; k \right ) \neq 0$ and
    $\tilde{H}_{b-2} \left ( (1,\m')_{L}; k \right ) \neq
    0$?  \end{question}

With the same idea, one could translate \cref{q:main} into breaking up
simplicial homology using Hochster's formula.

\subsubsection{Hochster's Formula}

Let $I=(\m_1,\ldots, \m_q)$ be a square-free monomial ideal 
  in the polynomial ring $S=k[\xs]$.
Hochster's formula (see for example~\cite[Cor.~8.1.4 and Prop.~5.1.8]{HH}) states that if
$I=\N(\Gamma)$ and $\m_u$ a monomial, then
\begin{equation}\label{e:Hochster}
  \beta_{i,\m_u}(S/I)=\dim_k
\rhk{i-2}{\lk_{\GA}(u^c)}=\dim_k\rhk{|u|-i-1}{\Gamma_u}
\end{equation}
where $u^c=[n]\sm u$ is the set complement of $u$. We would now like to reinterpret~\cref{q:main} in the language of
Hochster's formula. To begin with, since we are dealing with
square-free monomials, we can consider a monomial $\m_u$ equivalent
to the set $u$ and use  intersections for $\gcd$,
unions for $\lcm$, and $\m_u^c$ for $u^c$.

Suppose
 $$\beta_{i,x_1\ldots
   x_n}(S/I)=\dim_k \rhk{i-2}{\lk_{\GA}(\emptyset)}=\dim_k
 \rhk{i-2}{\GA} \neq 0$$ and $i=a+b$ where $a,b >0$.  We would like to know if there are complements $\m, \m' \in \LCM(I)$ such
 that $$\beta_{a, \m}(S/I)\neq 0 \mbox{ and } \beta_{b, \m'}(S/I)\neq
 0.$$

 First observe that, $\GA=\tuple{\m_1^c,\ldots,\m_q^c}$
   (e.g.~\cite{HH} or \cite[Prop. 2.4]{F2}).

 We have
\begin{align*} 
  \m \in \LCM(I) \iff & \m=\m_{i_1}\cup \m_{i_2}\cup \cdots \cup \m_{i_s} \mbox{ for some  } 1 \leq i_1 < i_2 < \cdots < i_s \leq q \\
  \iff & \m^c = \m_{i_1}^c\cap \m_{i_2}^c\cap \cdots \cap \m_{i_s}^c \mbox{ for some  }
  1 \leq i_1 < i_2 < \cdots < i_s \leq q \\
  \iff & \m^c \mbox{ is the intersection of some facets of } \GA.
\end{align*}

Moreover, if $\m, \m' \in \LCM(I)$, then 
\begin{align*}
  \m, \m' \mbox{are complements} \iff & \m \cup \m' =[n]
  \mbox{ and }  \m \cap \m' \notin I\\
  \iff & \m^c \cap \m'^c = \emptyset \mbox{ and } \m \cap \m' \in \Gamma\\
  \iff & \m^c \cap \m'^c = \emptyset \mbox{ and } (\m \cap \m')^c \notin \GA\\
  \iff & \m^c \cap \m'^c = \emptyset \mbox{ and } \m^c \cup \m'^c \notin \GA.\\
\end{align*}

So we are looking for subsets $A,B \subseteq [q]$ such that
\begin{enumerate}
\item $\m^c=\bigcap_{j \in A} \m_j^c$ and $\m'^c=\bigcap_{j \in B} \m_j^c $
\item $\m^c \cap \m'^c = \emptyset$
\item $\m^c \cup \m'^c \notin \GA$
\item $\rhk{a-2}{\lk_{\GA}(\m^c)} \neq 0$
  and $\rhk{b-2}{\lk_{\GA}(\m'^c)} \neq 0$.
\end{enumerate}

Now we can state \cref{q:main} in the following form.

\begin{question}\label{q:main-H-link} If $\Gamma=\tuple{F_1,\ldots,F_q}$ is a 
  simplicial complex with $\rhk{i-2}{\Gamma} \neq 0$ and $i=a+b$ where
  $a,b >0$, can we find subsets $A,B \subseteq [q]$ such
  that
\begin{enumerate}
\item $F=\bigcap_{j \in A} F_j$ and $G=\bigcap_{j \in B} F_j $
\item $F \cap G = \emptyset$ 
\item $F \cup G \notin \Gamma$
\item $\rhk{a-2}{\lk_{\Gamma}(F)} \neq 0$ and $\rhk{b-2}{\lk_{\Gamma}(G)} \neq 0$?
\end{enumerate}
\end{question}

  \begin{example} \label{e:run-example}If $\N(I)^{\vee}=\Gamma=\langle
    xzu,xzv,xuv,yzu,yzv,yuv,xy \rangle$,

\[ 
\begin{tikzpicture}
\tikzstyle{point}=[inner sep=0pt]
\node (x)[point,label=left:$x$] at (1,2,0) {};
\node (y)[point,label=right:$y$] at (1,-2,0) {};
\node (z)[point,label=left:$z$] at (-1.5,0,0) {};
\node (u)[point,label=right:$u$] at (.5,0,0) {};
\node (v)[point] at (0,-.25,1) {};
\draw [fill=gray!20](x.center) -- (v.center) -- (z.center);
\draw [fill=gray!20](x.center) -- (v.center) -- (u.center);
\draw [fill=gray!20](x.center) -- (u.center) -- (z.center);
\draw [fill=gray!20](y.center) -- (v.center) -- (z.center);
\draw [fill=gray!20](y.center) -- (v.center) -- (u.center);
\draw [fill=gray!20](y.center) -- (u.center) -- (z.center);
\draw (x.center) -- (y.center);
\draw (x.center) -- (z.center);
\draw (x.center) -- (v.center);
\draw (x.center) -- (u.center);
\draw (y.center) -- (z.center);
\draw (y.center) -- (u.center);
\draw (y.center) -- (v.center);
\draw (z.center) -- (v.center);
\draw[dashed] (z.center) -- (u.center);
\draw (u.center) -- (v.center);
\node (f)[point,label=left:$v$] at (-.35,-.35,1) {};
\end{tikzpicture} \]

then  $I=(xz,yz,xu,yu,xv,yv,zuv)$ has  Betti table
\bigskip

$$\begin{array}{rlllll}
&0 &1 &2 &3 &4\\
\mbox{total}:&1 &7 &11 &6 &1\\
0:&1 &. &. &. &.\\
1:&. &6 &9 &5 &1 \\
2:&. &1 &2 &1 &.
\end{array}$$

\bigskip

So $\beta_{i,xyzuv} \neq 0$ when $i=3,4$, which corresponds to
nonvanishing of homology of links of faces of $\Gamma$ in dimensions
$1,2$. We consider each case separately:

    \begin{enumerate}

    \item $i=3$, $a=1$, $b=2$.  Then $\rhk{1}{\Gamma}\neq0$. Let
      $F=xy$ and $G=xuv \cap yuv =uv$, then $F\cap G = \emptyset$, $F \cup G=xyuv \notin \Gamma$, and $$\rhk{a-2}{\lk_{\Gamma}(F)} =\rhk{-1}{\tuple{\emptyset}} \neq 0 \mbox{ and } \rhk{b-2}{\lk_{\Gamma}(G)}=\rhk{0}{\tuple{x,y}} \neq 0.$$

  \item $i=4$, $a=1$, $b=3$. Then $\rhk{2}{\Gamma}\neq0$. Let $F=yzu$
    and $G=xzu \cap xuv \cap xzv  \cap xy =x$, then $F\cap G = \emptyset$, $F \cup
    G=xyzu \notin \Gamma$, and $$\rhk{a-2}{\lk_{\Gamma}(F)}
    =\rhk{-1}{\tuple{\emptyset}} \neq 0 \mbox{ and }
    \rhk{b-2}{\lk_{\Gamma}(G)}=\rhk{1}{\tuple{zu,uv,zv,y}} \neq 0.$$

  \item $i=4$, $a=2$, $b=2$. Then $\rhk{2}{\Gamma}\neq0$. Let $F=yzu \cap yuv=yu$
    and $G=xzu \cap xzv =xz$, then $F\cap G = \emptyset$, $F \cup
    G=xyzu \notin \Gamma$, and $$\rhk{a-2}{\lk_{\Gamma}(F)}
    =\rhk{0}{\tuple{z,v}} \neq 0 \mbox{ and }
    \rhk{b-2}{\lk_{\Gamma}(G)}=\rhk{0}{\tuple{u,v}} \neq 0.$$
    \end{enumerate}
\end{example}

A dual version of \cref{q:main-H-link} can be stated as
  follows (see \cref{c:dual-version} for the justification).

\begin{question}\label{q:main-H} If  $\Gamma$ is a simplicial
  complex on the vertex set $\{\xs\}$, and $\rhk{i-2}{\Gamma} \neq 0$,
  and $n-i+1=a+b$, where $a$ and $b$ are positive integers, are there
  nonempty subsets $C, D \subseteq \{\xs\}$ such that

\begin{enumerate}
\item $C\cup D  =\{\xs\}$
\item $C\cap D \in \Gamma$
\item $\rhk{|C|-a-1}{\Gamma_C} \neq 0$   and  $\rhk{|D|-b-1}{\Gamma_D} \neq 0$?
\end{enumerate}
\end{question}

\begin{example}
Let $\N(I)=\Gamma=\langle zwx,vwx,uvx,zux,zuy,uvy,vwy,zwy\rangle$.

\[ \begin{tabular}{ccccc}
\begin{tikzpicture}
\tikzstyle{point}=[inner sep=0pt]
\node (z)[point,label=left:$z$] at (0.1,0) {};
\node (u)[point,label=right:$u$] at (2,0) {};
\node (v)[point,label=right:$v$] at (2.8,0.7) {};
\node (w)[point,label=left:$w$] at (1,0.7) {};
\node (x)[point,label=above:$x$] at (1.5,2.1) {};
\node (y)[point,label=below:$y$] at (1.7,-1.3) {};
\node (uu)[point,label=right:$u$] at (2.3,-0.1) {};
\node (ww)[point,label=left:$w$] at (0.7,0.9) {};
\draw [fill=gray!20](z.center) -- (x.center) -- (u.center);
\draw [fill=gray!20](u.center) -- (v.center) -- (x.center);
\draw (z.center) -- (x.center) -- (u.center);
\draw[dashed] (z.center) -- (w.center) -- (v.center);
\draw[dashed] (w.center) -- (x.center);
\draw [fill=gray!20](z.center) -- (u.center) -- (y.center) -- (z.center);
\draw [fill=gray!20](v.center) -- (y.center) -- (u.center);
\draw[dashed] (w.center) -- (y.center);
\draw (uu.center);
\draw (ww.center);
\end{tikzpicture} &\hspace{0.7 in} &\begin{tikzpicture}
\tikzstyle{point}=[circle,thick,draw=black,fill=black,inner sep=0pt]
\node (x)[point,label=above:$x$] at (1.5,1) {};
\node (y)[point,label=below:$y$] at (1.5,-1) {};
\draw (x.center);
\draw (y.center);
\end{tikzpicture}  &\hspace{0.7 in} &\begin{tikzpicture}
\tikzstyle{point}=[circle,thick,draw=black,fill=black,inner sep=0pt]
\node (z)[point,label=below:$z$] at (1.1,0) {};
\node (u)[point,label=below:$u$] at (3,0) {};
\node (v)[point,label=above:$v$] at (3.8,0.7) {};
\node (w)[point,label=above:$w$] at (2,0.7) {};
\draw (z.center) -- (u.center) -- (v.center) -- (w.center) -- cycle;
\end{tikzpicture}\\
&&\\
$\Gamma$ && $\Gamma_C$ && $\Gamma_D$
\end{tabular}\]

Then $I=(xy,zv,uw)$ has Betti table

$$\begin{array}{rllll}
&0 &1 &2 &3 \\
\mbox{total}:&1 &3 &3 &1\\
0:&1 &. &. &. \\
1:&. &3 &. &. \\
2:&. &. &3 &.\\
3:&. &. &. &1
\end{array}$$

\bigskip

So $\be_{3,xyzuvw}(S/I)\neq 0$ which corresponds to nonvanishing homology of $\Gamma$ in dimension $2$ (i.e. $\rhk{2}{\Gamma}\neq 0$). Let $a=1$ and $b=2$. Choose $C=\{x,y\}$ and $D=\{z,u,v,w\}$. Then $C\cup D=\{x,y,z,u,v,w\}$, $C\cap D=\emptyset \in \Gamma$ and

\begin{center}
	$\rhk{|C|-a-1}{\Gamma_C}=\rhk{0}{\langle x,y \rangle} \neq 0$ and $\rhk{|D|-b-1}{\Gamma_D}=\rhk{1}{\langle zu,uv,vw,zw \rangle}\neq 0$.
	\end{center}	
	
\end{example}
A positive answer to either \cref{q:main-H-link} or \cref{q:main-H}
would settle the subadditivity question for syzygies.

\section{Main results}

The following lemma is an easy exercise.

\begin{lemma}\label{l:link-lemma} $\Gamma$ simplicial complex and
  $A \in \Gamma$ and $B\in\lk_\Gamma(A)$, then
  $$\lk_{\lk_\Gamma(A)}(B)=\lk_\Gamma(A\cup B).$$
\end{lemma}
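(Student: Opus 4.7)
The plan is to verify the set equality $\lk_{\lk_\Gamma(A)}(B) = \lk_\Gamma(A \cup B)$ by checking both inclusions directly from the definition of link, which for a face $F$ of $\Gamma$ reads $\lk_\Gamma(F) = \{G \in \Gamma : F \cap G = \emptyset \text{ and } F \cup G \in \Gamma\}$. The only nonroutine ingredient is the hypothesis $B \in \lk_\Gamma(A)$, which is needed to supply both $A \cap B = \emptyset$ and $A \cup B \in \Gamma$, so that $A \cup B$ is itself a face of $\Gamma$ and taking its link makes sense.

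For the forward inclusion, I would take $C \in \lk_{\lk_\Gamma(A)}(B)$ and unpack: this means $C \in \lk_\Gamma(A)$, $B \cap C = \emptyset$, and $B \cup C \in \lk_\Gamma(A)$. The first gives $A \cap C = \emptyset$ and $A \cup C \in \Gamma$, and the third gives $A \cap (B \cup C) = \emptyset$ together with $A \cup B \cup C \in \Gamma$. Combining, $(A \cup B) \cap C = (A \cap C) \cup (B \cap C) = \emptyset$ and $(A \cup B) \cup C = A \cup B \cup C \in \Gamma$, so $C \in \lk_\Gamma(A \cup B)$.

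For the reverse inclusion, suppose $C \in \lk_\Gamma(A \cup B)$, so $(A \cup B) \cap C = \emptyset$ and $A \cup B \cup C \in \Gamma$. Since simplicial complexes are closed under subsets, every subset of $A \cup B \cup C$ lies in $\Gamma$; in particular $A \cup C \in \Gamma$ and $A \cup B \in \Gamma$. From $(A \cup B) \cap C = \emptyset$ I get $A \cap C = \emptyset$ and $B \cap C = \emptyset$, so $C \in \lk_\Gamma(A)$ and $B$ and $C$ are disjoint. Finally, using $A \cap B = \emptyset$ (from $B \in \lk_\Gamma(A)$) together with $A \cap C = \emptyset$ gives $A \cap (B \cup C) = \emptyset$, and $A \cup B \cup C \in \Gamma$, so $B \cup C \in \lk_\Gamma(A)$. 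Thus $C \in \lk_{\lk_\Gamma(A)}(B)$.

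There is no substantive obstacle here; the whole argument is a bookkeeping exercise in distributing intersections and unions and invoking the subset-closure of $\Gamma$. The only point where one might slip is forgetting that $B \in \lk_\Gamma(A)$ is what guarantees $A \cap B = \emptyset$, which is used in the reverse direction to verify the disjointness condition $A \cap (B \cup C) = \emptyset$.
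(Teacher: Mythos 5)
Your proof is correct and complete; both inclusions are verified carefully from the definition of link, and you correctly identify that $A \cap B = \emptyset$ (coming from $B \in \lk_\Gamma(A)$) is the hypothesis needed in the reverse direction. The paper itself gives no proof of this lemma, dismissing it as ``an easy exercise,'' so there is no authorial argument to compare against --- your direct unwinding of the definitions is exactly the intended routine verification.
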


In a simplicial complex $\Gamma$ we say a $d$-cycle
$\Sigma$ is {\bf supported on} faces $F_1,\ldots,F_q$ if
$\Sigma=a_1F_1+ \cdots + a_qF_q$ for nonzero scalars $a_1,\ldots,a_q
\in k$. We say that $\Sigma$ is   {\bf a face-minimal cycle} or {\bf minimally supported on} $F_1,\ldots,F_q$ if additionally no proper subset of $F_1,\ldots,F_q$ is the support of a $d$-cycle. If $\Sigma$ is supported on
$F_1\ldots,F_q$, we call the simplicial complex $\tuple{F_1,\ldots,F_q}$ the {\bf support complex of $\Sigma$}.

\cref{e:guide1} can guide the reader through the statement of the
theorem below, a variation of which appears as Theorem 4.2
of~\cite{RW}.

\begin{theorem}\label{t:new-link-going-down} Let $k$ be a field, 
  $\Gamma$ a $d$-dimensional simplicial complex, and
  $$\Sigma=a_1F_1 + \cdots + a_qF_q \hspace{.5in} a_1, \ldots, a_q \in
  k$$ a $d$-cycle in $\Gamma$ supported on
  $F_1,\ldots,F_q$ which is not a boundry, so that $\rhk{d}{\Gamma} \neq 0$. Suppose $A$ is a face of the support complex of $\Sigma$ such
  that for some $s \leq q$ we have $$A \subseteq F_1\cap \ldots \cap
  F_s, \mbox{ and } A \not \subseteq F_j \mbox{ if } j>s$$ and $0 \leq |A| \leq d+1$. Then
 \begin{enumerate}
 \item there are $\epsilon_i \in \{\pm 1\}$ for
  $i=1,\ldots,s$ such that
  $$\Sigma_A=\epsilon_1a_1(F_1\sm A)+\cdots + \epsilon_sa_s(F_s\sm A)$$ 
  is a $(d-|A|)$-cycle in $\lk_\Gamma(A)$ that is not a boundary in
  $\lk_\Gamma(A)$; 
 \item $\rhk{d-|A|}{\lk_\Gamma(A)}\neq 0$;
 \item $A = F_1\cap \ldots \cap F_s$.
 \end{enumerate}
 \end{theorem}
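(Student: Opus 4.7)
\emph{Plan.} The plan is to proceed by induction on $|A|$: the base case $|A|=0$ is immediate (since $\Sigma_A=\Sigma$, $\lk_\Gamma(\emptyset)=\Gamma$, and (3) is vacuous), while the inductive step reduces to $|A|=1$ by peeling off one vertex of $A$ at a time and invoking \cref{l:link-lemma}. So the crux is the base case $A=\{v\}$.

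For $A=\{v\}$, I split $\Sigma=\Sigma^++\Sigma^-$ with $\Sigma^+=\sum_{i\leq s}a_iF_i$ collecting the faces containing $v$, and fix $\epsilon_i\in\{\pm 1\}$ so that the coefficient of $F_i\sm v$ in $\partial F_i$ equals $\epsilon_i$ (equivalently, $v*(F_i\sm v)=\epsilon_iF_i$ as oriented simplices). From $\partial\Sigma=0$ and the fact that $\partial\Sigma^-$ contains no face with $v$, separating $\partial\Sigma^+$ by whether its terms contain $v$ forces the $v$-free part to equal $-\partial\Sigma^-$. That $v$-free part is precisely $\Sigma_A=\sum_{i\leq s}a_i\epsilon_i(F_i\sm v)$, so $\Sigma_A=-\partial\Sigma^-$ and $\partial\Sigma_A=-\partial^2\Sigma^-=0$. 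Each $F_i\sm v$ lies in $\lk_\Gamma(v)$, and $\Sigma_A$ is nonzero because the $a_i$ are nonzero and the $F_i$ are distinct; hence $\Sigma_A$ is a nonzero $(d-1)$-cycle in $\lk_\Gamma(v)$.

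The non-boundary claims (1) and (2) then follow from a dimension count: since $A\subseteq F_i$ for some $i$ with $|F_i|=d+1$, the complex $\lk_\Gamma(A)$ has dimension exactly $d-|A|$, so its chain complex has no $(d-|A|+1)$-chains, leaving no nonzero $(d-|A|)$-boundary. Thus $\Sigma_A$ is not a boundary and $\rhk{d-|A|}{\lk_\Gamma(A)}\neq 0$. For (3), suppose for contradiction that there exists $w\in(F_1\cap\cdots\cap F_s)\sm A$; then every face in $\mathrm{supp}(\Sigma_A)$ contains $w$, so $\Sigma_A$ is a top-dimensional chain supported in the closed star of $w$ inside $\lk_\Gamma(A)$, which is a cone of dimension $d-|A|$. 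A direct boundary calculation shows that a cone admits no nonzero top-dimensional cycle, forcing $\Sigma_A=0$ and contradicting the previous paragraph.

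For the inductive step when $|A|\geq 2$, pick $v\in A$, set $A'=A\sm v$, and let $s'$ be the number of $F_j$ containing $A'$ (so $s\leq s'$, and after reordering one can take $F_1,\ldots,F_{s'}$ to contain $A'$ with $F_1,\ldots,F_s$ additionally containing $A$). The induction hypothesis gives $\Sigma_{A'}$ as a nonzero $(d-|A|+1)$-cycle, not a boundary in $\lk_\Gamma(A')$, supported on $\{F_i\sm A':i\leq s'\}$, with $A'=F_1\cap\cdots\cap F_{s'}$. Running the $|A|=1$ argument inside $\lk_\Gamma(A')$ on $\Sigma_{A'}$ and the vertex $v$: the faces of $\mathrm{supp}(\Sigma_{A'})$ containing $v$ are exactly $\{F_i\sm A':i\leq s\}$, since $v\in F_i$ iff $A\subseteq F_i$ iff $i\leq s$. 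By \cref{l:link-lemma}, $\lk_{\lk_\Gamma(A')}(v)=\lk_\Gamma(A)$, so setting $\Sigma_A:=(\Sigma_{A'})_{\{v\}}$ with $\epsilon_i=\epsilon'_i\widetilde\epsilon_i$ delivers (1) and (2), while base-case (3) gives $\{v\}=\bigcap_{i\leq s}(F_i\sm A')$, which rearranges to $A=F_1\cap\cdots\cap F_s$. The main obstacle throughout is consistent orientation-sign bookkeeping for the $\epsilon_i$; the conceptual heart of the argument is the ``no top-dimensional cycle in a cone'' observation that yields (3).
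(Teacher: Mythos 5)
The proposal is correct and takes essentially the same approach as the paper: induction on $|A|$, peeling off one vertex, splitting $\partial\Sigma$ into the $v$-containing and $v$-free parts to identify $\Sigma_A$ as $-\partial\Sigma^-$, and using the fact that $\lk_\Gamma(A)$ has dimension exactly $d-|A|$ (so a nonzero top-dimensional cycle cannot be a boundary) together with the ``no top-dimensional cycle in a cone'' observation for Statement~3. The student packages the inductive step as a reduction to the $|A|=1$ case applied inside $\lk_\Gamma(A')$ via \cref{l:link-lemma}, whereas the paper carries out the boundary computation directly in the inductive step, but the underlying calculation and key ideas are the same.
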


 \begin{proof} The case $|A|=d+1$ will result in $\lk_\Gamma(A)=\{\emptyset\}$ which has $(-1)$-homology. So we can assume that $0 \leq |A| \leq d$. To prove Statement~1 we will proceed using induction on
   $a=|A|$. If $a=0$, then $\lk_\Gamma(A)=\Gamma$, $\Sigma_A=\Sigma$
   and there is nothing to prove.

   Suppose $a>0$, $A=\{v_1,\ldots, v_a\}$, $A'=\{v_1,\ldots,
   v_{a-1}\}$ (or $A'=\emptyset$ when $a=1$) and
   $\Gamma'=\lk_\Gamma(A')$, and suppose without loss of
   generality $$A' \subseteq F_1\cap \ldots \cap F_t \mbox{ and } A'
   \not \subseteq F_j \mbox{ for } j>t\geq s.$$ By the induction
   hypothesis, for some $\epsilon_i'\in \{\pm 1\}$ there is a
   $(d-(a-1))$-cycle
$$\Sigma_{A'}=a_1\epsilon_1'(F_1\sm A')+\cdots + a_t\epsilon_t'(F_t\sm
A')$$ in $\Gamma'$ that is not a boundary in $\Gamma'$ and
$\rhk{d-(a-1)}{\Gamma'}\neq 0$. In particular, we
  must have $t\neq s$ as otherwise the support complex of $\Sigma_{A'}$
  would be a cone with every facet containing $v_a$, a contradiction.

  We know that $v_a \in (F_i\sm A')$ if and only if $i\leq
  s$. Depending on the orientation of the faces of the complex
  $\Gamma'$, for some $\epsilon_{i}''\in \{\pm 1\}$, we can write
   $$\begin{array}{rl}
   0=&\partial(\Sigma_{A'})\\
   &\\
    =&\epsilon_1'a_1\partial(F_1\sm A')+ \cdots + \epsilon_t'a_t\partial(F_t \sm A')\\
   &\\
    =&\epsilon_1''\epsilon_1'a_1(F_1 \setminus A)+ \cdots +
    \epsilon_s''\epsilon_s'a_s(F_s \setminus A) + \mathcal{U} +
    \partial(\epsilon_{s+1}'a_{s+1}F_{s+1} \sm A'+ \cdots + \epsilon_t'a_tF_t \sm A')
  \end{array}$$
  where $\mathcal{U}$ consists of all the summands above which contain 
  the vertex $v_a$, and hence 
    $$\mathcal{U}=\sum_{j=1}^s \epsilon_j'a_j\left(\partial(F_j \sm
  A') - \epsilon_j''F_j \sm A \right )=0.$$

    If we set $\epsilon_i=\epsilon_i''\epsilon_i'$ and
    $\Sigma_A=\epsilon_1a_1(F_1 \sm A) + \cdots + \epsilon_sa_s(F_s
    \sm A)$ it follows that
      $$\Sigma_A= -\partial(\epsilon_{s+1}'a_{s+1}(F_{s+1} \sm A')+ \cdots + 
    \epsilon_t'a_t(F_t \sm A'))$$ and
    $$\partial(\Sigma_A)=- \partial^2(\epsilon_{s+1}'a_{s+1}(F_{s+1}
    \sm A')+ \cdots + \epsilon_t'a_t(F_t \sm A'))=0.$$ So $\Sigma_A$
    is a $(d-a)$-cycle in $\lk_{\Gamma'}(v_a)=\lk_\Gamma(A)$ by
    \cref{l:link-lemma} (and since $v_a \in \Gamma'$). Since
    $\dim(\lk_\Gamma(A))=d-|A|$, the $(d-|A|)$-cycle $\Sigma_A$ is not
    a boundary in $\lk_\Gamma(A)$. Therefore, $\rhk{d-|A|}{\Gamma}\neq
    0$, proving Statement 2.

      To see Statement~3, note that if $F_1\ldots,F_s$ all contain a vertex
      outside $A$, then the support complex of $\Sigma_A$ would be a cone, contradicting Statement~2.
 \end{proof}

\begin{example}\label{e:guide1}   Let $\Gamma=\langle
    xy,zu,zv,uv \rangle$, which is the Alexander dual of the simplicial complex $\Gamma$ in \cref{e:run-example}.

\begin{center}
	\begin{tikzpicture}
\tikzstyle{point}=[circle,thick,draw=black,fill=black,inner sep=0pt]
		\node (x)[point,label=left:$x$] at (1.5,1,0) {};
		\node (y)[point,label=right:$y$] at (1.5,-1,0) {};
		\node (z)[point,label=left:$z$] at (-1.5,0,0) {};
		\node (u)[point,label=right:$u$] at (.5,0,0) {};
		\node (v)[point,label=below:$v$] at (0,-.25,1) {};
		\draw (x.center) -- (y.center);
		\draw (z.center) -- (v.center);
		\draw (z.center) -- (u.center);
		\draw (u.center) -- (v.center);
		%\node (f)[point,label=left:$e$] at (-.35,-.35,1) {};
	\end{tikzpicture}
\end{center}
%\begin{center}	
%	$\Gamma$
%\end{center}

As stated in \cref{t:new-link-going-down}, $\Gamma$ is a $1$-dimensional simplicial complex and has $\Sigma=uz+zv+vu$ as a $1$-cycle so that $\rhk{1}{\Gamma} \neq 0$. Taking $A=\{z\}$, then $\Sigma_A=u-v$ is a $0$-cycle in $\lk_{\Gamma}(A)=\tuple{u,v}$ with $\rhk{0}{\lk_{\Gamma}(A)} \neq 0$. 

%Note that even more is true: $\Gamma$ has a $0$-cycle $\Sigma^\prime=x-z$. Taking $A^\prime=\{z\}$, then we have $\Sigma^\prime_{A^\prime}=\emptyset$ is a $(-1)$-cycle in $\lk_{\Gamma}(A^\prime)=\tuple{u,v}$ and $\rhk{-1}{\lk_{\Gamma}(A)}\neq 0$. 

\end{example}

%\begin{remark}
%\cref{t:new-link-going-down} holds automatically for $0$-cycles. Indeed, if $\Gamma$ is a $d$-dimensional simplicial complex for which $\Sigma$ is a $0$-cycle in $\Gamma$ and $\rhk{0}{\Gamma}\neq 0$, then taking any face $A$ in $\Gamma$ such that $|A|=1$ and $A \subseteq \Sigma$, $\Sigma_{A}=\emptyset$ and $\rhk{-1}{\lk_{\Gamma}(A)}\neq 0$.
	%\end{remark}    

\begin{corollary}\label{c:intersection}
  Let $k$ be a field, $\Gamma$ a $d$-dimensional simplicial complex
  with $\rhk{d}{\Gamma}\neq 0$, and let $\Sigma$ be a 
  $d$-cycle in $\Gamma$ which is not a boundary. Let $A$ be a face of the support complex of $\Sigma$, and suppose $F_1\ldots,F_q$ are the facets of $\Gamma$
  that contain $A$. Then $$A=\displaystyle \bigcap_{j=1}^q F_j.$$
  \end{corollary}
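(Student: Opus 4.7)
The plan is to reduce the statement to Statement~(3) of \cref{t:new-link-going-down}, applied only to those facets appearing in the support of $\Sigma$. Since $\dim\Gamma = d$ and $\Sigma$ is a $d$-cycle, every facet of the support complex of $\Sigma$ is a $d$-dimensional face of $\Gamma$, and therefore must itself be a facet of $\Gamma$ (no strictly larger face exists in a $d$-dimensional complex). Denote the facets of the support complex by $G_1,\ldots,G_m$, and after relabelling assume that $G_1,\ldots,G_s$ are exactly those that contain $A$, with $A\not\subseteq G_j$ for $j>s$. Because $A$ is a face of the support complex we have $s\geq 1$, and because $A$ is a \emph{proper} face of a $d$-dimensional complex we have $0\leq |A|\leq d$.

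At this point the hypotheses of \cref{t:new-link-going-down} are satisfied for $A$ with respect to the faces $G_1,\ldots,G_m$, and Statement~(3) of that theorem yields $A = G_1\cap\cdots\cap G_s$. Each $G_i$ with $1\leq i\leq s$ is a facet of $\Gamma$ containing $A$, so $\{G_1,\ldots,G_s\}\subseteq\{F_1,\ldots,F_q\}$. Combining this with the identity just established produces the chain
\[
A \;\subseteq\; \bigcap_{j=1}^q F_j \;\subseteq\; \bigcap_{i=1}^s G_i \;=\; A,
\]
which forces equality throughout and proves the corollary.

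I do not anticipate any real obstacle: the substantive work is bundled inside \cref{t:new-link-going-down}, and the corollary merely extends its conclusion from the support facets of $\Sigma$ to \emph{all} facets of $\Gamma$ containing $A$. The only point that deserves care is the appeal to $\dim\Gamma = d$ in order to identify support facets of the cycle with facets of the ambient complex; without that hypothesis, some $F_j$ could properly contain one of the $G_i$ and the right-hand inclusion in the display above would fail.
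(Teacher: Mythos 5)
Your proof is correct. It does, however, use a different piece of \cref{t:new-link-going-down} than the paper does: you invoke Statement~(3) (the intersection formula for the support facets of $\Sigma$ containing $A$) and then pass from the support facets to \emph{all} facets of $\Gamma$ containing $A$ via the inclusion chain $A\subseteq\bigcap_j F_j\subseteq\bigcap_i G_i = A$. The paper instead invokes Statement~(2) (nonvanishing of $\rhk{d-|A|}{\lk_\Gamma(A)}$) and observes directly that if some vertex of $\bigcap_j F_j$ lay outside $A$, then $\lk_\Gamma(A)=\tuple{F_1\sm A,\ldots,F_q\sm A}$ would be a cone and hence acyclic. Both arguments are one step from the theorem; yours avoids re-running the cone argument by reusing Statement~(3) as a black box, while the paper's has the mild merit of never needing the observation that the $d$-faces in the support of $\Sigma$ are automatically facets of the $d$-dimensional complex $\Gamma$ (a point you correctly flag as the one subtlety in your route). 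Either is a clean derivation.
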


  \begin{proof} Since
    $\lk_\Gamma(A)=\tuple{F_1 \sm A, \ldots, F_q \sm A}$, if there is
    a vertex of $\displaystyle \bigcap_{j=1}^q F_j$ which is not in
    $A$, then $\lk_\Gamma(A)$ would be a cone, and would therefore
    have no homology, contradicting \cref{t:new-link-going-down}.
  \end{proof}

\cref{t:subadd-link} below is a formal statement on breaking
homological cycles. We refer the reader to parts (2) and (3) of
\cref{e:run-example} where we demostrated the theorem's
statement. Note also that the case in part (1) of \cref{e:run-example}
follows the same pattern, though a proof is not known yet.

\begin{theorem}[{\bf Breaking up cycles on links}]\label{t:subadd-link}
  Let $k$ be a field and $\Gamma=\tuple{F_1,\ldots,F_r}$ be a
  $d$-dimensional simplicial complex such that $$\rhk{d}{\Gamma} \neq 0 \mbox{ and } d+2=a+b \mbox{ for some }
    a,b>0.$$ Suppose $\Gamma$ contains a $d$-dimensional
    cycle $$\Sigma=\displaystyle\sum_{j=1}^qa_jF_j$$ supported on the
    facets $F_1,\ldots,F_q$ of $\Gamma$, and $\Sigma$ is not boundary in
    $\Gamma$. Then there are subsets $A,B \subseteq [q] \subseteq [r]$ with $$F=\bigcap_{j \in A} F_j \mbox{ and }
  G=\bigcap_{j \in B} F_j$$ such that
\begin{enumerate}
\item $F \cap G = \emptyset$;
\item $F \cup G \notin \Gamma$; %if $i-2=d$;
\item $\rhk{a-2}{\lk_{\Gamma}(F)} \neq 0$ and $\rhk{b-2}{\lk_{\Gamma}(G)} \neq
  0$.  \suspend{enumerate} Moreover, if $a,b>1$, $F$ and $G$ and $\epsilon_j, \delta_j \in \{\pm 1\}$ could be chosen to additionally satisfy: 
  \resume{enumerate}
\item  $|F|=b$ and $|G|=a$;

 \item $\Sigma_F=\displaystyle\sum_{j \in A} \epsilon_ja_j \left (F_j \sm F\right )$ is an $(a-2)$-cycle in $\lk_{\Gamma}(F)$ which is
  not a boundary ;
\item   $\Sigma_G=\displaystyle\sum_{j \in B} \delta_j a_j \left (F_j \sm G \right )$ is a
  $(b-2)$-cycle in $\lk_{\Gamma}(G)$ which is not a boundary.
\end{enumerate}
\end{theorem}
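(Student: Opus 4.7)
The plan is to invoke Theorem~\ref{t:new-link-going-down} twice: first on a face $F$ of size $b$ to extract the cycle $\Sigma_F$ in $\lk_\D(F)$, and then on a face $G$ of size $a$ disjoint from $F$ to extract $\Sigma_G$ in $\lk_\D(G)$. The sizes are forced by the dimension count: facets of the support complex have dimension $d$, so $\lk_\D(F)$ has dimension $d-|F|$, which equals $a-2$ exactly when $|F|=b$, and similarly $|G|=a$ for $\lk_\D(G)$ to carry $(b-2)$-homology.

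To build $F$, I would pick any support facet, say $F_1$, and let $F$ be an arbitrary $b$-subset of $F_1$; this is a face of the support complex since $b\leq d+1$. After relabelling so that $F_1,\ldots,F_s$ are the support facets containing $F$, Theorem~\ref{t:new-link-going-down} applied to $F$ should produce signs $\epsilon_j\in\{\pm 1\}$, the identity $F=\bigcap_{j=1}^{s}F_j$, and an $(a-2)$-cycle $\Sigma_F=\sum_{j=1}^{s}\epsilon_ja_j(F_j\sm F)$ in $\lk_\D(F)$ that is not a boundary. Setting $A=\{1,\ldots,s\}$ then delivers the first halves of conditions (3), (4), and (5).

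The hard part is to locate a support facet that does not contain $F$, so that $G$ can be chosen inside it and disjoint from $F$. The key observation is that the support complex $\tuple{F_1,\ldots,F_q}$ cannot be a cone: if every $F_j$ contained a common vertex $v$, then the $(d-1)$-faces $F_j\sm\{v\}$ would all be distinct, and since $v\in F_i$ for every $i$, no $F_i$ with $i\neq j$ could contain $F_j\sm\{v\}$ as a facet; thus the coefficient of $F_j\sm\{v\}$ in $\partial\Sigma$ would be $\pm a_j\neq 0$, contradicting $\partial\Sigma=0$. Consequently $\bigcap_{j=1}^{q}F_j=\emptyset$, and since $F\neq\emptyset$ some support facet $F_{j_0}$ fails to contain $F$, forcing $|F_{j_0}\sm F|\geq (d+1)-(b-1)=a$. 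Any $a$-subset $G\subseteq F_{j_0}\sm F$ is then a face of the support complex of size $a$, disjoint from $F$.

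A second application of Theorem~\ref{t:new-link-going-down} with the chosen $G$ then produces $B\subseteq[q]$, signs $\delta_j\in\{\pm 1\}$, the identity $G=\bigcap_{j\in B}F_j$, and the $(b-2)$-cycle $\Sigma_G$ in $\lk_\D(G)$, covering the second halves of (3), (4), and (6). Condition (1) is immediate from the construction, and condition (2) follows from $|F\cup G|=a+b=d+2$, which exceeds the size of every face of a $d$-dimensional complex. The main obstacle is the non-cone observation for the support complex; once this is secured, the rest is bookkeeping around Theorem~\ref{t:new-link-going-down} and the elementary size inequality $|F_{j_0}\sm F|\geq a$.
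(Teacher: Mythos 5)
Your proof is correct and reaches all six conclusions, but it takes a genuinely different route from the paper's in the heart of the argument. The paper first splits into cases: $b=1$ (handled by picking a vertex $F=\{v\}$ and any facet $G$ avoiding $v$, with a further subcase for $a=1$) and $a,b\geq 2$ (handled by a boundary-cancellation argument: since $F_1=\{w_1,v_1,\ldots,v_{d}\}$ lies in the support of a cycle, the $(d-1)$-face $\{w_1,v_2,\ldots,v_{d}\}$ must lie in another support facet $F_2=\{w_1,w_2,v_2,\ldots,v_d\}$, and then the paper sets $G=\{v_1,\ldots,v_a\}$ and $F=\{v_{a+1},\ldots,v_{d},w_1,w_2\}$ explicitly). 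You instead argue uniformly for all $a,b>0$: choose any $b$-subset $F$ of a support facet, observe that the support complex cannot be a cone (the coefficient of $F_j\sm\{v\}$ in $\partial\Sigma$ cannot vanish for a cone apex $v$), conclude that some support facet $F_{j_0}$ misses $F$, bound $|F_{j_0}\sm F|\geq a$ by the size inequality, and pick $G$ as any $a$-subset of $F_{j_0}\sm F$. Both approaches ultimately exploit $\partial\Sigma=0$; the paper's detects an adjacent facet sharing a codimension-one face, yours detects that the support facets cannot share a common vertex. Your version buys a uniform treatment of the boundary cases $a=1$ and $b=1$ and sidesteps the explicit relabelling of vertices, at the cost of some extra care in the $d=0$ case (where the ``cone'' argument degenerates to the observation that a single vertex cannot carry a reduced $0$-cycle, which still goes through); the paper's version is more concrete, yielding a named pair of facets and explicit vertex partitions. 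Both are valid, and your construction of $A$, $B$ via Theorem~\ref{t:new-link-going-down}(3) and the dimension count $|F\cup G|=a+b=d+2$ correctly delivers conditions (1)--(6).
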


\begin{proof} Set $i=d+2$. We first consider the case $b=1$ and $a=i-1$. If $a=1$, then $d=0$ and $\Gamma$ is disconnected. Let $F$ and $G$ be two facets each belonging to a distinct connected component of
    $\Gamma$. Then we clearly have $F\cap G=\emptyset$ and $F \cup G
    \notin \Gamma$. Moreover, $\lk_{\Gamma}(F)=\lk_{\Gamma}(G)=\{\emptyset\}$ and so 
    $$\rhk{a-2}{\lk_{\Gamma}(F)}=\rhk{b-2}{\lk_{\Gamma}(G)}= \rhk{-1}{\{ 
      \emptyset \}}\neq 0$$ as desired.

    If $b=1$ and $a=i-1>1$, then $d=a+b-2>0$. By
  \cref{t:new-link-going-down}, if we take a vertex $v$ in the support
  complex of $\Sigma$, then $\rhk{i-3}{\lk_\Gamma(v)} \neq
  0$.

  Since $\Sigma$ is a cycle, not all of $F_1, \ldots,F_q$ contain $v$.
  Let $G$ be one of the facets $F_1,\ldots, F_q$ that does not
  contain $v$. Then $F\cap G =\emptyset$ and $F\cup G \notin \Gamma$ (as
  $G$ is a facet), and
  moreover $$\rhk{a-2}{\lk_\Gamma(F)}=\rhk{i-3}{\lk_\Gamma(v)}
  \neq 0 \mbox{ and } \rhk{b-2}{\lk_\Gamma(G)}=
  \rhk{-1}\{\emptyset\} \neq 0.$$
   
     Now suppose $a, b \geq 2$ and $a=i-b$. Suppose
     $F_1=\{w_1,v_1,\ldots,v_{i-2}\}$. Then since $F_1$ is in the
     support of the $(i-2)$-cycle $\Sigma$, $\{w_1,
     v_2,\ldots,v_{i-2}\}$ must appear in another one of the $F_j$ in
     the support of $\Sigma$, say $F_2$. Suppose
     $F_2=\{w_1,w_2,v_2,\ldots,v_{i-2}\}$. Considering that $a=i-b
     \leq i-2$, let $$G=\{v_1,\ldots,v_a\}
     \mbox{ and } F=\{v_{a+1},\ldots,v_{i-2},w_1,w_2\}.$$

     Then $|G|=a$ and $|F|=i-2+2-a=b$. Moreover $F \cap
       G =\emptyset$ by construction, and if $i-2=d$, then $F\cup G
       \notin \Gamma$ since $|F \cup G|=d+2$ which is larger than the size
       of any face of $\Gamma$.

    By \cref{t:new-link-going-down}, and noting that
    $i-2-|G|=b-2$ and $i-2-|F|=a-2$, we have
    $$\rhk{a-2}{\lk_\Gamma(F)}\neq 0 \mbox{ and } \rhk{b-2}{\lk_\Gamma(G)}\neq 0,$$ conditions~5 and~6 are satisfied, and if $$A=\{ j\in [q] \st F
    \subset F_j\} \mbox{ and } B=\{j \in [q] \st G \subset F_j\}$$
    then $$F=\bigcap_{j \in A} F_j \mbox{ and } G=\bigcap_{j \in B}
    F_j.$$
    
\end{proof}

Another version of \cref{t:subadd-link} below is one which gives
lower-dimensional cycles in induced subcomplexes.

\begin{corollary}[{\bf Breaking up cycles}]\label{c:dual-version} Let $\Gamma$ be a simplicial
  complex on the vertex set $\{\xs\}$, and suppose
  $\rhk{d-2}{\Gamma} \neq 0$, where $d$ is
    the smallest possible size of a nonface of $\Gamma$. Suppose
    $n-d+1=a+b$, where $a$ and $b$ are positive integers.  Then there
  are nonempty subsets $C, D \subseteq \{\xs\}$ such that

\begin{enumerate}
\item $C\cup D  =\{\xs\}$;
\item $C\cap D \in \Gamma$;
\item $\rhk{|C|-a-1}{\Gamma_C} \neq 0$   and  $\rhk{|D|-b-1}{\Gamma_D} \neq 0$.
\end{enumerate}
\end{corollary}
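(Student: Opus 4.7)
\emph{Proof plan.} The strategy is to reduce \cref{c:dual-version} to \cref{t:subadd-link} applied to the Alexander dual $\GA$, with Hochster's formula~\cref{e:Hochster} serving as a dictionary between induced subcomplexes of $\Gamma$ and links in $\GA$. Throughout, set $I = \N(\Gamma)$.

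\emph{Translating the hypothesis.} First I would verify that $\GA$ satisfies the hypotheses of \cref{t:subadd-link}. Applying the second equality of~\cref{e:Hochster} with $u = [n]$ (so that $\Gamma_u = \Gamma$ and $u^c = \emptyset$) and $i = n - d + 1 = a+b$ gives
$$\rhk{a+b-2}{\GA} = \rhk{i-2}{\lk_{\GA}(\emptyset)} \cong \rhk{|u|-i-1}{\Gamma} = \rhk{d-2}{\Gamma} \neq 0.$$
The facets of $\GA$ are the complements of the minimal nonfaces of $\Gamma$, so since $d$ is the smallest size of any nonface, the largest facets of $\GA$ have size $n-d$, giving $\dim \GA = n-d-1 = a+b-2$. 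Thus the above $(a+b-2)$-homology is genuine top-dimensional homology of $\GA$, and $(\dim \GA) + 2 = a+b$.

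\emph{Applying \cref{t:subadd-link} and translating back.} Fix a non-bounding $(a+b-2)$-cycle $\Sigma$ in $\GA$ and apply \cref{t:subadd-link}, producing faces $F$ and $G$ of $\GA$, each an intersection of facets, such that
$$F \cap G = \emptyset, \quad F \cup G \notin \GA, \quad \rhk{a-2}{\lk_{\GA}(F)} \neq 0, \quad \rhk{b-2}{\lk_{\GA}(G)} \neq 0.$$
Set $C = [n] \sm F$ and $D = [n] \sm G$. Elementary set theory yields $C \cup D = (F \cap G)^c = [n]$, and $C \cap D = (F \cup G)^c$ lies in $\Gamma$ precisely because $F \cup G \notin \GA$, by the definition of the Alexander dual. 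A second application of~\cref{e:Hochster}, this time with $u = C$ and $i = a$ (so $u^c = F$), converts $\rhk{a-2}{\lk_{\GA}(F)} \neq 0$ into $\rhk{|C|-a-1}{\Gamma_C} \neq 0$; the symmetric application with $u = D$ and $i = b$ handles $\Gamma_D$.

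The main obstacle is really bookkeeping: one must verify that $\rhk{a+b-2}{\GA}$ is honest \emph{top-dimensional} homology of $\GA$, as this is what \cref{t:subadd-link} requires, and this is precisely where the hypothesis that $d$ is the \emph{smallest} size of a nonface is essential (via the identification $\dim \GA = n-d-1$). A secondary check is the nonemptiness of $C$ and $D$, which follows from $|F|, |G| \leq \dim(\GA) + 1 = n-d < n$ whenever $\Gamma$ has any nonface, i.e., in every situation where the statement is not vacuous.
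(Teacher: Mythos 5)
Your proposal is correct and follows essentially the same route as the paper's own proof: establish $\rhk{n-d-1}{\GA}\neq 0$ and $\dim\GA = n-d-1$, apply \cref{t:subadd-link} to $\GA$, set $C=F^c$, $D=G^c$, and translate the link–homology conditions back to induced subcomplexes of $\Gamma$ via Hochster's formula. The only cosmetic difference is that you obtain the Alexander duality isomorphism by composing the two sides of~\cref{e:Hochster} with $u=[n]$ rather than citing it directly, and you explicitly record the nonemptiness of $C$ and $D$ — a small point the paper leaves implicit.
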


\begin{proof} By Alexander duality - see Prop.~5.1.10
    and the discussion preceding Prop.~5.1.8 in~\cite{HH}- we have
  that $\rhk{n-d-1}{\GA} \neq 0$. Now $d$ is the smallest
    possible size of a nonface of $\Gamma$, so by the definition of Alexander duals, $\dim(\GA)=n-d-1$.

  Suppose $\GA=\tuple{F_1,\ldots,F_r}$. 
  If $n-d+1=a+b$, then, by \cref{t:subadd-link}, there are subsets $A$ and $B$ of $[r]$ 
   such that $$F=\bigcap_{j \in A} F_j \mbox{ and }
  G=\bigcap_{j \in B} F_j$$ and 
\begin{enumerate}
\item[(i)] $F \cap G = \emptyset$;
\item[(ii)] $F \cup G \notin \GA$; 
\item[(iii)] $\rhk{a-2}{\lk_{\GA}(F)} \neq 0$ and $\rhk{b-2}{\lk_{\GA}(G)} \neq
  0$.  \end{enumerate}

Now let $$C=F^c=\bigcup_{j \in A} F_j^c \mbox{ and } D=G^c=\bigcup_{j
  \in B} F_j^c.$$ Then by~(i), $C\cup D=(F \cap G) ^c=\{\xs\}$.
By~(ii), $(C\cap D)^c= F\cup G \notin \GA$ so $C\cap D \in
\Gamma$. Finally by~(iii) and \cref{e:Hochster},
$\rhk{|C|-a-1}{\Gamma_C} \neq 0$ and $\rhk{|D|-b-1}{\Gamma_D} \neq 0$.
\end{proof}

\begin{theorem}[{\bf Subadditivity of syzygies of square-free monomial ideals}]\label{t:subadditivity} If $I$ is a square-free monomial ideal in the polynomial ring $S=k[\xs]$ where
  $k$ is a field, and $d$ is the smallest possible degree of a
  generator of $I$. Suppose $i=n-d+1$, 
  $\beta_{i,n}(S/I) \neq 0$ and $i=a+b$, for some positive integers
$a$ and $b$. Then $t_i \leq t_a + t_b$.
        \end{theorem}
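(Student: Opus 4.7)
The plan is to apply \cref{c:dual-version} to the Stanley-Reisner complex of $I$, after reducing to the square-free case by polarization, and then translate everything back via Hochster's formula.

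By polarization we may assume $I$ is square-free, and write $I=\N(\Gamma)$ for a simplicial complex $\Gamma$ on the vertex set $\{\xs\}$. The minimal generators of $I$ correspond bijectively to the minimal nonfaces of $\Gamma$, so the smallest degree $d$ of a generator of $I$ equals the smallest possible size of a nonface of $\Gamma$. Since $x_1\cdots x_n$ is the unique squarefree monomial of degree $n$ in $n$ variables, the hypothesis $\beta_{i,n}(S/I)\neq 0$ is the same as $\beta_{i,\,x_1\cdots x_n}(S/I)\neq 0$, and Hochster's formula~\cref{e:Hochster} combined with $i=n-d+1$ yields
$$\rhk{d-2}{\Gamma}\;=\;\rhk{n-i-1}{\Gamma}\;\neq\;0.$$

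Next I invoke \cref{c:dual-version}: since $a+b=i=n-d+1$ with $a,b\geq 1$, there exist nonempty subsets $C,D\subseteq\{\xs\}$ satisfying $C\cup D=\{\xs\}$, $C\cap D\in\Gamma$, and both $\rhk{|C|-a-1}{\Gamma_C}\neq 0$ and $\rhk{|D|-b-1}{\Gamma_D}\neq 0$. Applying Hochster's formula in the reverse direction translates these into $\beta_{a,\m_C}(S/I)\neq 0$ and $\beta_{b,\m_D}(S/I)\neq 0$, whence $t_a\geq |C|$ and $t_b\geq |D|$.

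To finish, observe that for a squarefree monomial ideal in $n$ variables every nonvanishing multigraded Betti number $\beta_{i,\m_u}(S/I)$ has $|u|\leq n$, so $t_i\leq n$; combined with $\beta_{i,n}(S/I)\neq 0$ this forces $t_i=n$. Therefore
$$t_a+t_b\;\geq\;|C|+|D|\;\geq\;|C\cup D|\;=\;n\;=\;t_i,$$
which is the claimed subadditivity. The heavy combinatorial lifting is done entirely by \cref{c:dual-version}; the remainder is formal bookkeeping via Hochster's formula, so the real obstacle in establishing this subadditivity result was already overcome in the breaking-up-cycles theorem, not in this final deduction.
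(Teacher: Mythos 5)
Your proof is correct and follows the same route as the paper's: polarize to reduce to the square-free case, translate $\beta_{i,n}(S/I)\neq 0$ into $\rhk{d-2}{\Gamma}\neq 0$ via Hochster's formula, invoke \cref{c:dual-version} to produce $C$ and $D$, translate back with Hochster to get $t_a\geq |C|$ and $t_b\geq |D|$, and conclude. Your added remarks — that $d$ equals the smallest nonface size of $\Gamma$ (needed to meet the hypothesis of \cref{c:dual-version}), that the unique degree-$n$ squarefree multidegree forces $\beta_{i,n}=\beta_{i,x_1\cdots x_n}$, and the explicit justification that $t_i=n$ — are small clarifications the paper leaves implicit, but they do not change the argument.
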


        \begin{proof} By Hochster's
          formula (\cref{e:Hochster}), if $\Gamma=\N(I)$,
          then $$\be_{n-d+1,n}(S/I)=\beta_{n-d+1,x_1\cdots x_n}(S/I)=\dim_k
          \rhk{d-2}{\Gamma}\neq 0.$$

          If $n-d+1=a+b$, then by \cref{c:dual-version}, there are
          nonempty subsets $C, D \subseteq \{\xs\}$ such that
$$C\cup D  =\{\xs\} \mbox{ and } C\cap D \in \Gamma,$$
  and 
$$\rhk{|C|-a-1}{\Gamma_C} \neq 0 \mbox{ and } \rhk{|D|-b-1}{\Gamma_D}
  \neq 0.$$

  By \cref{e:Hochster}, this means that $$\beta_{a,|C|}(S/I) \neq 0
  \mbox{ and } \beta_{b,|D|}(S/I) \neq 0,$$ so that $t_a \geq |C|$ and
  $t_b \geq |D|$. Putting this all together we get
  $$t_a + t_b \geq |C| + |D| \geq n = t_i,$$
which settles our claim.
     \end{proof}

  \begin{discussion} Given a square-free monomial ideal $I$ if we are
    looking for top degree Betti numbers, by Hochster's formula
    (\cref{e:Hochster})
       $$\beta_{n-i-1,n}(S/I)=\dim_k\rhk{i}{\Gamma}.$$

       Now if $d$ is the smallest possible degree of a generator of
       $I$, then all monomials of degree $\leq d-1$ are not in $I$,
       which means all possible faces of dimension $ \leq d-2$ are in
       $\Gamma=\N(I)$. This means that the smallest index $i$ with $\rhk{i}{\Gamma}\neq 0$ is $d-2$, that is $$\rhk{i}{\Gamma}=0 \mbox{ for } i <d-2$$ and hence
$$\beta_{j,n}(S/I)=0 \mbox{ for } j=n-i-1>n-d+1.$$ So $n-d+1$ is the
       maximum homological degree where we could have a nonvanishing
       top degree Betti number.  We
       do not have an example of our setting where $n-d+1$ is not the
       projective dimension. After comparing with bounds on the projective dimension of $S/I$ given by  Dao and Schweig~\cite[Theorem~3.2,
         Remark~3.4]{DS} 
       in terms of dominance parameters of clutters, we concluded 
       that $n-d+1$ is often either the projective dimension of $S/I$
       or very close to it, though we were not able to determine how close. 
\end{discussion}

 \begin{example} Let $I=(xyz,xzv,xuv,yzu,yuv)$ be an ideal of
	$S=k[x,y,z,u,v]$ in $5$ variables. Here the smallest degree of a
	generator of $I$ is $d=3$, so $n-d+1=3$, so we pick $a=1$ and
	$b=2$. According to Macaulay2~\cite{M2} the Betti table of $S/I$
	is
	$$\begin{array}{rlllll}
	&0 &1 &2 &3\\
	\mbox{total}:&1 &5 &5 &1\\
	0:&1 &. &. &.\\
	1:&. &. &. &. \\
	2:&. &5 &5 &1
	\end{array}$$
	which verifies that
	$$t_3=5, t_2=4, t_1=3 \Longrightarrow t_3 < t_1+t_2 =7.$$ 
\end{example}

\begin{example}
	In \cref{e:run-example}, $I=(xz,yz,xu,yu,xv,yv,zuv)$ is a
        square-free monomial ideal in $5$ variables where $d=2$ and
        $n-d+1=4$. According to the Betti table of $I$, $t_4=5$,
        $t_3=5$, $t_2=4$ and $t_1=3$. Here $t_4<t_1+t_3=8$ and
        $t_4<2t_2=8$. Note that we also have $\beta_{3,5}(S/I)\neq 0$
        where $3 <4=n-d+1$ while still we have $t_3 < t_1+t_2=7$.
	\end{example}

%%%%%%%%%%%%%%%%%%%%%%%%%%%%%%%%%%%%%%%%%
\section{Special cases of breaking up simplicial homology}
%%%%%%%%%%%%%%%%%%%%%%%%%%%%%%%%%%%%%%%%%

  In this section, we consider breaking up special classes of cycles,
  where we can provide a combinatorial description for the
  lower-dimensional cycles.

\subsection{The case of a disconnected simplicial complex}

We begin with an example.

\begin{example}\label{e:ex-4.1}
	Let $\N(I)=\Gamma=\langle uv,xy,yz,xz \rangle$ be a simplicial complex on $n=5$ vertices.
	
	$$\begin{tikzpicture}

	\coordinate (u) at (-1,0.7);
	\coordinate (v) at (0,0.7);
	\coordinate (x) at (2,1);
	\coordinate (y) at (1,0);
	\coordinate (z) at (3,0);

	\draw (x)node[circle,thick,draw=black,fill=black,inner sep=0pt,label=$x$]{}--(y)node[circle,thick,draw=black,fill=black,inner sep=0pt,label=below:$y$]{}--(z)node[circle,thick,draw=black,fill=black,inner sep=0pt,label=below:$z$]{}--cycle;
	
	\draw (u)node[circle,thick,draw=black,fill=black,inner sep=0pt,label=below:$u$]{}--(v)node[circle,thick,draw=black,fill=black,inner sep=0pt,label=below:$v$]{};

\end{tikzpicture}  $$
	
Here $\rhk{0}{\Gamma}\neq 0$ and hence $\be_{4,uvxyz}(S/I)\neq 0$. If $4=a+b$, then using \cref{c:dual-version} we have
the following two cases to consider.

\begin{enumerate}
\item $a=1$ and $b=3$. Let $C=\{u,x\}$ and $D=\{u,v,y,z\}$. Then
  $C\cup D= \{u,v,x,y,z\}$, $C \cap D=\{u\}\in \Gamma$ and
$$\rhk{|C|-a-1}{\Gamma_C}=\rhk{0}{\langle u,x \rangle}\neq 0 \mbox{
  and } \rhk{|D|-b-1}{\Gamma_D}=\rhk{0}{\langle uv,yz \rangle}\neq
  0.$$

\item $a=b=2$. Let $C=\{u,x,v\}$ and $D=\{u,y,z\}$. Then $C\cup D=
  \{u,v,x,y,z\}$, $C \cap D=\{u\}\in \Gamma$ and 
$$\rhk{|C|-a-1}{\Gamma_C}=\rhk{0}{\langle uv,x \rangle}\neq 0 \mbox{
    and } \rhk{|D|-b-1}{\Gamma_D}=\rhk{0}{\langle u,yz \rangle}\neq
  0.$$
\end{enumerate}
\end{example}

In general if $\Gamma$ is a disconnected complex on $n$
  vertices with Stanley-Reisner ideal $I$, then $\beta_{n-1,n}(S/I)
  \neq 0$, and if $n-1=a+b$ for some $a,b>0$, then we can always find
  disconnected induced subcomplexes $\Gamma_C$ and $\Gamma_D$ where
  $C=a+1$ and $D=b+1$, as in the example above. Below we demonstrate
  how this can be done.

	If $\Gamma$ is  disconnected, then it has the form
		$$\Gamma=\Gamma_1 \cup \cdots \cup \Gamma_t$$	
	where $\Gamma_1,\ldots,\Gamma_t$ are connected components and
        $t > 1$. In this case, $|V(\Gamma_i)| \geq 1$ for all $1 \leq
        i \leq t$, $V(\Gamma)=V(\Gamma_1) \cup \cdots \cup
        V(\Gamma_t)$ and $V(\Gamma_k) \cap V(\Gamma_l) = \emptyset$
        for all $1 \leq k < l \leq t$.
	
	Without loss of generality and up to renaming the variables,
        we can assume the following:
	\begin{itemize}
	\item $|V(\Gamma_1)| \leq |V(\Gamma_2)| \leq \cdots \leq
          |V(\Gamma_t)|$,

        \item $x_k \in V(\Gamma_k)$ for $1 \leq k \leq t$,

        \item $V(\Gamma_1)=\{x_1,x_{t+1},\ldots,x_{t+|V(\Gamma_1)|-1}\}$

        \item $V(\Gamma_k)=\{x_k,x_{{\tiny (t+|V(\Gamma_1)|+\cdots+|V(\Gamma_{k-1})|-k+2)}},\ldots,x_{{\tiny (t+|V(\Gamma_1)|+\cdots+|V(\Gamma_{k})|-k)}} \}$ for each $1 < k \leq t$.
	\end{itemize}

   \begin{example}
	The simplicial complex $\Gamma$ in \cref{e:ex-4.1} can
        be relabeled and written as $\Gamma=\Gamma_1 \cup \Gamma_2$
        where $\Gamma_1=\langle x_1 x_3 \rangle$ and $\Gamma_2=\langle
        x_2x_4,x_4x_5,x_2x_5\rangle$.
        $$\begin{tikzpicture}

        	\coordinate (x1) at (-1,0.7);
        	\coordinate (x3) at (0,0.7);
        	\coordinate (x2) at (2,1);
        	\coordinate (x4) at (1,0);
        	\coordinate (x5) at (3,0);

        	\draw (x2)node[circle,thick,draw=black,fill=black,inner sep=0pt,label=$x_2$]{}--(x4)node[circle,thick,draw=black,fill=black,inner sep=0pt,label=below:$x_4$]{}--(x5)node[circle,thick,draw=black,fill=black,inner sep=0pt,label=below:$x_5$]{}--cycle;
        	
        	\draw (x1)node[circle,thick,draw=black,fill=black,inner sep=0pt,label=below:$x_1$]{}--(x3)node[circle,thick,draw=black,fill=black,inner sep=0pt,label=below:$x_3$]{};

        \end{tikzpicture}  $$

	\end{example}

   For each $1 \leq a< n-1$, define $$C=\{x_1,x_2,
   \ldots,x_{a+1}\} \mbox{ and } D=\{x_1,x_{a+2},\ldots,x_n\}.$$
   Clearly $C\cup D=\{x_1,\ldots,x_n\}$, $|C|=a+1$, $|D|=n-a$ and
   $C\cap D=\{x_1\}\in \Gamma$. Moreover, it is easy to see that both
   $\Gamma_C$ and $\Gamma_D$ are disconnected induced subcomplexes of
   $\Gamma$ on the subsets $\{x_1,x_2,\ldots,x_{a+1}\}$ and
   $\{x_1,x_{a+2},\ldots,x_n\}$, respectively. Therefore, if $b=n-a-1$
   $$\rhk{|C|-a-1}{\Gamma_C}=\rhk{0}{\Gamma_C}\neq 0 \mbox{ and }
   \rhk{|D|-b-1}{\Gamma_D}=\rhk{0}{\Gamma_D}\neq 0.$$

\subsection{The case of a graph cycle}
  
Recall that a {\em cycle} in a graph $G$ is an ordered list of
distinct vertices $x_1,\ldots,x_n$ where the edges are $x_{i-1}x_i$ for $2\leq i \leq n$ and $x_nx_1$. Graph
cycles characterize nontrivial $1$-homology in simplicial complexes;
see for example Theorem~3.2 in~\cite{C}.
	
 Suppose $\Gamma$ is a simplicial complex on the set $\{x_1,\ldots,x_n\}$
 that is the support complex of a face-minimal graph cycle, so that $\rhk{1}{\Gamma}\neq 0$. This
   means that $\beta_{n-2,n}(S/I) \neq 0$. Suppose $n-2=a+b$ for some
   $a,b>0$.

 Without loss of
 generality, $\Gamma$ can be written in the form
$$\Gamma=\langle x_1x_2,x_2x_3, \ldots, x_{n-1}x_n,x_nx_1 \rangle.$$

For $1 \leq a < n-2$, define
	$$C=\{x_1,x_3,x_4,\ldots,x_{a+2}\} \mbox{ and } D=\{x_2,x_{a+3},\ldots,x_n\}.$$

Clearly, $C\cup D=\{x_1,\ldots,x_n\}$, $|C|=a+1$, $|D|=n-a-1$ and
$C\cap D=\emptyset \in \Gamma$. Moreover, it is easy to see that both
$\Gamma_C$ and $\Gamma_D$ are disconnected induced subcomplexes of
$\Gamma$ on the subsets $\{x_1,x_3,x_4,\ldots,x_{a+2}\}$ and
$\{x_2,x_{a+3},\ldots,x_n\}$, respectively. Therefore,
	$$\rhk{|C|-a-1}{\Gamma_C}=\rhk{0}{\Gamma_C}\neq 0$$
and 
	$$\rhk{|D|-b-1}{\Gamma_D}=\rhk{0}{\Gamma_D}\neq 0$$
where $b=n-a-2$.

\begin{example}
	Let $\N(I)=\Gamma=\langle x_1x_2,x_2x_3,x_3x_4,x_4x_5,x_1x_5 \rangle$. 
	
$$\begin{tikzpicture}

\coordinate (a) at (0,1);
\coordinate (b) at (1,0.3);
\coordinate (c) at (1,-1);
\coordinate (d) at (-1,-1);
\coordinate (e) at (-1,0.3);

\draw (a)node[inner sep=0pt,label=$x_1$]{}--(b)node[inner sep=0pt,label=right:$x_2$]{}--(c)node[inner sep=0pt,label=right:$x_3$]{}--(d)node[inner sep=0pt,label=left:$x_4$]{}--(e)node[inner sep=0pt,label=left:$x_5$]{}--cycle;

\end{tikzpicture}$$	
	
Then $\rhk{1}{\Gamma}\neq 0$ and hence $\be_{3,x_1\cdots x_5}(S/I)\neq 0$. Taking $a=1$ and $b=2$, set $C=\{x_1,x_3\}$ and $D=\{x_2,x_4,x_5\}$. Then 
	\begin{center}
	$\rhk{|C|-a-1}{\Gamma_C}=\rhk{0}{\langle x_1,x_3 \rangle}\neq 0$ and $\rhk{|D|-b-1}{\Gamma_D}=\rhk{0}{\langle x_2,x_4x_5 \rangle}\neq 0$.
	\end{center}

\end{example}

 %%%%%%%%%%%%%%%%%%%%%%%%%%%%%%%%%%%%%%%%%
 \section{The case of general monomial ideals}\label{s:polarization}
 %%%%%%%%%%%%%%%%%%%%%%%%%%%%%%%%%%%%%%%%%
 
The polarization~\cite{Fr} of a monomial ideal $I$ is a method to
transform $I$ to a square-free monomial ideal, by adding new variables
to the polynomial ring. The procedure is described below.

\begin{definition}[{\bf Polarization}]\label{d:polarization} Let $I$ be
  minimally generated by
  monomials $\m_1,\ldots\m_q$ in the polynomial ring $R=k[\xs]$. For
  $i\in \{1 ,\ldots,n\}$, let 
  
$$p_i =  \begin{cases}
  	1 & \mbox{ if } x_i \not \mid \m_u \mbox{ for every } u \in [q]\\
  	\max\left \{j \st x_i^j \mid \m_u
  	 \mbox{ for some } u \in [q] \right \} & \mbox{ otherwise.}
  	\end{cases}$$ 
  Let $S$ be the polynomial ring in $\mathbf{p}=p_1 +\cdots +p_n$
  variables $$S=k[x_{i,j} \st 1 \leq i \leq n, \ 1 \leq j \leq p_i]$$
  and let the {\bf polarization of $I$} be the square-free monomial
  ideal $$\PP(I)= \left( \PP(\m_1),\ldots,\PP(\m_q) \right ) $$ where,
  if $\m={x_{a_1}}^{b_1}\cdots {x_{a_c}}^{b_c}$ where the $a_i$ are
  distinct integers in $\{1,\ldots,n \}$ and $1 \leq b_i \leq p_i$ for
  $1\leq i \leq c$, then
  $$\PP(\m)=x_{a_1,1}\cdots x_{a_1,b_1} x_{a_2,1}\cdots x_{a_2,b_2} \cdots x_{a_c,1}\cdots x_{a_c,b_c}.$$
\end{definition}

\begin{example} If $I=(x^2,xy^3z^2) \subseteq k[x,y,z]$ then its polarization
  is the square-free monomial ideal $\PP(I)=(x_1x_2,x_1y_1y_2y_3z_1z_2)$ in the polynomial ring  $k[x_1,x_2,y_1,y_2,y_3,z_1,z_2]$.
\end{example}

\begin{corollary}[{\bf Subadditivity of syzygies of  monomial ideals}]\label{t:subadditivity-general}
  If $I$ is a monomial ideal in the polynomial ring $R=k[\xs]$ where
  $k$ is a field, $d$ is the smallest possible degree of a generator
  of $I$, and $\mathbf{p}$ is  defined  as in
  \cref{d:polarization}. Suppose $i=\mathbf{p}- d + 1$, $\beta_{i,\mathbf{p}}(R/I) \neq
  0$ and $i=a+b$, for some positive integers $a$ and $b$. Then $t_i
  \leq t_a + t_b$.
        \end{corollary}

\begin{proof} Let $I=(\m_1,\ldots,\m_q)$, whose polarization is the square-free
  monomial ideal $\PP(I)$ in the polynomial ring $S$ in $\mathbf{p}$
  variables in \cref{d:polarization}.  Since
  $\beta_{i,\mathbf{p}}(R/I)\neq 0$, we must have
  $\beta_{i,\m}(R/I)\neq 0$ for some $\m \in \LCM(I)$. On the other
  hand, $\mathbf{p}$ is the largest possible degree for a monomial in
  $\LCM(I)$, and so $\m = \lcm(\m_1,\ldots,\m_q)$, the top monomial in
  the lcm lattice of $I$.

  Now the two lcm
  lattices $\LCM(I)$ and $\LCM(\PP(I))$ are isomorphic (\cite{GPW}),
  and the degree $\mathbf{p}$ square-free monomial $\PP(\m)$ sits on
  top of the lattice $\LCM(\PP(I))$, and so
  $\beta_{i,\mathbf{p}}(S/\PP(I)) \neq 0$. Now since
  $\deg(\m_i)=\deg(\PP(\m_i))$ for all $1 \leq i \leq q$, the
  conditions for \cref{t:subadditivity} hold, and therefore $t_i \leq
  t_a + t_b$ holds for the ideal $\PP(I)$. But as the graded Betti
  numbers of $I$ and $\PP(I)$ are equal, the inequality also holds for
  $I$, and we are done.
\end{proof}

\begin{example} Let $I=(xy^2,xyz,y^3,y^2z)$ be an ideal of $R=k[x,y,z]$. Here $\mathbf{p}=5$ and the smallest degree of a generator is $d=3$, so $\mathbf{p}-d+1=3$. We pick $a=1$ and $b=2$. According to Macaulay2~\cite{M2} the Betti table of $R/I$
		is
		$$\begin{array}{rlllll}
			&0 &1 &2 &3\\
			\mbox{total}:&1 &4 &4 &1\\
			0:&1 &. &. &.\\
			1:&. &. &. &. \\
			2:&. &4 &4 &1
		\end{array}$$
		which verifies that
		$$t_3=5, t_2=4, t_1=3 \Longrightarrow t_3 < t_1+t_2 =7.$$ 
 \end{example}

%%%%%%%%%%%%%%%%%%%%%%%%%%%%%%%%%%%%%%%%%
\section{Final Remarks}
%%%%%%%%%%%%%%%%%%%%%%%%%%%%%%%%%%%%%%%%%

\cref{q:main}, \cref{q:main-general},
\cref{q:main-H-link} and \cref{q:main-H} are all equivalent, though
their different settings allow the application of  different
(inductive) tools. All of them are open in their full generality as
far as we know, though each can be answered positively for certain
classes of ideals or combinatorial objects. A positive answer to
either would settle the subadditivity question for monomial ideals in
a polynomial ring.

%%%%%%%%%%%%%%%%%%%%%%%%%%%%%%%%%%%%%%%%%%%%%%%%%%%%%%%%%%%%

\end{document}